\newtheorem{remark}{Remark}
\newtheorem{corollary}{Corollary}
\newtheorem{thm}{Theorem}
\newtheorem{lem}{Lemma}
\newtheorem{prop}{Proposition}
\theoremstyle{definition}
\newtheorem{defin}{Definition}
\theoremstyle{remark}
\begin{document}

\title{Farey neighbors and hyperbolic Lorenz knots}

\author{Paulo Gomes\thanks{\'Area Departamental de Matem\'atica, Instituto Superior de Engenharia de Lisboa, e-mail: pgomes@adm.isel.pt}, Nuno Franco\thanks{CIMA-UE and Departamento de Matem\'atica, Universidade de \'Evora, e-mail: nmf@uevora.pt} and Lu\'is Silva\thanks{CIMA-UE and \'Area Departamental de Matem\'atica, Instituto Superior de Engenharia de Lisboa, e-mail: lfs@adm.isel.pt}}

\maketitle

\begin{abstract}
 Based on symbolic dynamics of Lorenz maps, we prove that, provided one conjecture due to Morton is true, then Lorenz knots associated to orbits of points in the renormalization intervals of Lorenz maps with reducible kneading invariant of type $(X,Y)*S$, where the sequences $X$ and $Y$ are Farey neighbors verifying some conditions, are hyperbolic.
\end{abstract}

\section{Introduction}
\label{sec:intro}

\subsection*{Lorenz knots}
\label{sec:lorknots}

\par

\emph{Lorenz knots} are the closed (periodic) orbits in the Lorenz system \cite{Lorenz63}

\begin{align}
  \label{eq:lorsys}
  x' &= -10 x +10 y \nonumber \\
  y' &= 28 x -y -xz \\
  z' &= -\frac{8}{3} z +xy \nonumber 
\end{align}
while \emph{Lorenz links} are finite collections of (possibly linked) Lorenz knots.

The systematic study of Lorenz knots and links was made possible by the introduction of the \emph{Lorenz template} or knot-holder by Williams in \cite{Williams77} and \cite{Williams79}. It is a branched 2-manifold equipped with an expanding semi-flow, represented in Fig. \ref{fig:lortemp}. It was first conjectured by Guckenheimer and Williams and later proved through the work of Tucker \cite{Tucker02} that every knot and link in the Lorenz system can be projected into the Lorenz template. Birman and Williams made use of this result to investigate Lorenz knots and links \cite{Birman83}. For a review on Lorenz knots and links, see also \cite{Birman11}.

A $T(p,q)$ torus knot is (isotopic to) a curve on the surface of an unknotted torus $T^2$ that intersects a meridian $p$ times and a longitude $q$ times. Birman and Williams \cite{Birman83} proved that every torus knot is a Lorenz knot.

A satellite knot is defined as follows: take a nontrivial knot $C$ (companion) and nontrivial knot $P$ (pattern) contained in a solid unknotted torus $T$ and not contained in a $3-ball$ in $T$. A satellite knot is the image of $P$ under an homeomorfism that takes the core of $T$ onto $C$.

A knot is  hyperbolic if its complement in $S^3$ is a hyperbolic $3-manifold$. Thurston \cite{Thurston82} proved that a knot is hyperbolic \emph{iff} it is neither a satellite knot nor a torus knot. One of the goals in the study of Lorenz knots has been their classification into \emph{hyperbolic} and \emph{non-hyperbolic}, possibly further distinguishing torus knots from satellites.  Birman and Kofman \cite{Birman09} listed hyperbolic Lorenz knots taken from a list of the simplest hyperbolic knots. In a previous article, \cite{Gomes14}, we generated and tested for hyperbolicity, using the program \emph{SnapPy}, families of Lorenz knots that are a generalization of some of those that appear in this list, which led us to conjecture that the families tested are hyperbolic \cite{Gomes13}.

 Morton has conjectured \cite{Elrifai88},\cite{Dehornoy11} that all Lorenz satellite knots are cablings (satellites where the pattern is a torus knot) on Lorenz knots.

In \cite{PhysicaD}, based in the work of El-Rifai, \cite{Elrifai88}, we derived an algorithm to obtain Lorenz satellite braids, together with the corresponding words from symbolic dynamics.

The first-return map induced by the semi-flow on the \emph{branch line} (the horizontal line in Fig. \ref{fig:lortemp}) is called a \emph{Lorenz map}. If the branch line is mapped onto $[-1,1]$, then the Lorenz map $f$ becomes a one-dimensional map from $[-1,1] \setminus \{0\}$ onto $[-1,1]$, with one discontinuity at $0$ and stricly increasing in each of the subintervals $[-1,0[$ and $]0,1]$.

Periodic orbits in the flows correspond to periodic orbits on the Lorenz maps, so symbolic dynamics of the Lorenz maps provide a codification of the Lorenz knots. In \cite{DCDS}, using this codification, it was introduced an operation over Lorenz knots that is directly related with renormalization of Lorenz maps.

In this work we prove that some families of knots, generated from torus knots through this operation, are hyperbolic.

\begin{figure}
  \centering
  \includegraphics[scale=.28]{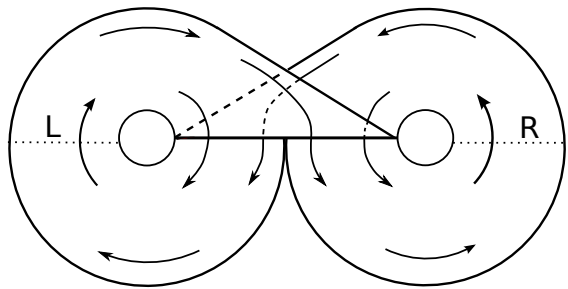}
  \caption{The Lorenz template}
  \label{fig:lortemp}
\end{figure}

\subsection*{Lorenz braids}
\label{sec:lorbraids}

If the Lorenz template is cut open along the dotted lines in Fig. \ref{fig:lortemp}, then each knot and link on the template can be obtained as the closure of an open braid on the cut-open template, which will be called the \emph{Lorenz braid} associated to the knot or link (\cite{Birman83}). These \emph{Lorenz braids} are simple positive braids (our definition of positive crossing follows Birman and is therefore opposed to an usual convention in knot theory). Each Lorenz braid is composed of $n=p+q$ strings, where the set of $p$ left or $L$ strings cross over at least one (possibly all) of the $q$ right strings, with no crossings between strings in each subset. These sets can be subdivided into subsets $LL$, $LR$, $RL$ and $RR$ according to the position of the startpoints and endpoints of each string. An example of a Lorenz braid is shown in Fig. \ref{fig:lorbraid}, where we adopt the convention of drawing the overcrossing ($L$) strings as thicker lines than the undercrossing ($R$) strings. This convention will be used in other braid diagrams.

Each Lorenz braid $\beta$ is a simple braid, i.e., a braid such that all its crossings are positive and every two strings only cross each other at most once, so it has an associated permutation $\pi$. This permutation has only one cycle \emph{iff} it is associated to a knot, and has $k$ cycles if it is associated to a link with $k$ components (knots).

\begin{figure}
  \centering
  \includegraphics[scale=1.0]{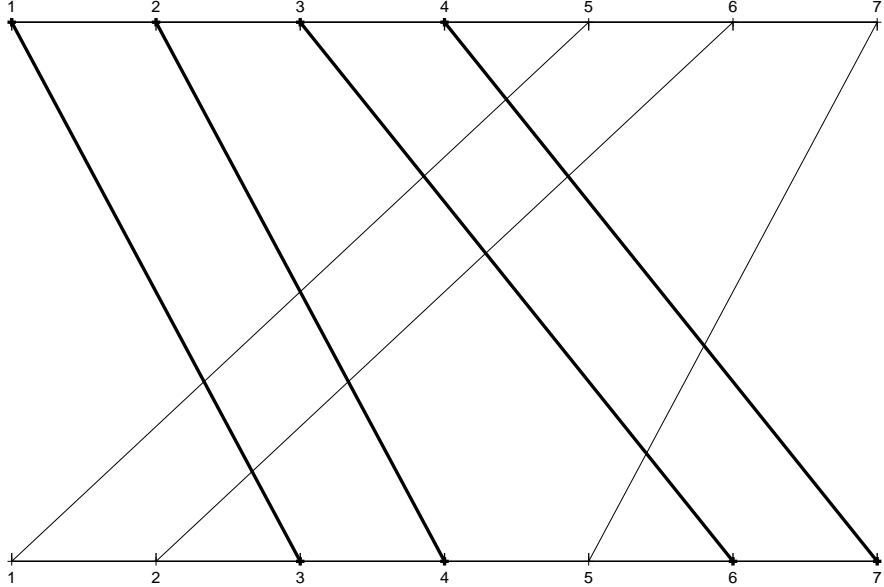}
  \caption{A Lorenz braid}
  \label{fig:lorbraid}
\end{figure}

\subsection*{Symbolic dynamics for the Lorenz map}
\label{sec:symbdynlor}

Let $f^j=f \circ f^{j-1}$ be the $j$-th iterate of the Lorenz map $f$ and $f^0$ be the identity map. We define the \emph{itinerary} of a point $x$ under $f$ as the symbolic sequence $(i_f(x))_j$, $j=0,1,\ldots$ where
$$(i_f(x))_j=\left\{\begin{array}{lll} L & \mathrm{ if } & f^j(x)<0\\
                                       0 & \mathrm{ if } & f^j(x)=0\\
                                       R & \mathrm{ if } & f^j(x)>0. \end{array} \right.$$

 The itinerary of a point in $[-1,1]\setminus \{0\}$ under the Lorenz map can either be an infinite word in the symbols $L,R$ or a finite word in $L,R$ terminated by a single symbol $0$ (because $f$ is undefined at $x=0$).  The \emph{length} $|X|$ of a finite word $X = X_0 \ldots X_{n-1}0$ is $n$, so it can be written as $X = X_0 \ldots X_{|X|-1}0$. A word $X$ is periodic if $X=(X_0 \dots X_{p-1})^{\infty}$ for some $p>1$. If $p$ is the least integer for which this holds, then $p$ is the (least) period of $X$.

The space $\Sigma$ of all finite and infinite words can be ordered in the lexicographic order induced by $L < 0 < R$: given $X, Y \in \Sigma$, let $k$ be the first index such that $X_k \neq Y_k$. Then $X<Y$ if $X_k < Y_k$ and $Y < X$ otherwise.

The \emph{shift map} $s:\Sigma\setminus\{0\} \to \Sigma$ is defined as usual by $s(X_0X_1 \ldots)=X_1 \ldots$ (it just deletes the first symbol). From the definition above, an infinite word $X$ is periodic \emph{iff} there is $p>1$ such that $s^p(X)=X$.  The sequence $W,s(W),\ldots,s^{p-1}(W)$ will also be called the \emph{orbit} of $W$ and a word in the orbit of $W$ will be generally called a shift of $W$.

A (finite or infinite) word $X$ is called \emph{L-maximal}
 if $X_0=L$ and for $k>0$, $X_k = L \Rightarrow s^k(X)\leq X$, and \emph{R-minimal} if $X_0=R$ and for $k>0$, $X_k = R \Rightarrow X \geq s^k(X)$. An infinite periodic word $(X_0 \ldots X_{n-1})^{\infty}$ with least period $n$ is L-maximal (resp. R-minimal) if and only if the finite word $X_0 \ldots X_{n-1}0$ is L-maximal (resp. R-minimal). Therefore, there exists a bijective correspondence between the set of $L$-maximal (resp.$R$-minimal) finite words and the cyclic permutations classes of periodic words.

For a finite word $W$, $n_L(W)$ and $n_R(W)$ will denote respectively the number of $L$ and $R$ symbols in $W$, and $n=n_L+n_R$ the length of $W$. Analogously, if $W$ is periodic with least period $n$ then we define $n_L(W)=n_L(W')$ and $n_R(W)=n_R(W')$, where $W'$ is the $L$-maximal finite word corresponding to $W$. Each periodic word is associated to a Lorenz braid (whose closure is a Lorenz knot), which can be obtained through the following procedure: given a periodic word $W$ with least period  $n$, order the successive shifts $s(W),s^2(W),\ldots,s^n(W)=W$ lexicographically and associate them to startpoints and endpoints in the associated Lorenz braid, with points corresponding to words starting with $L$ lying on the left half and points corresponding to words starting with $R$ on the right half. Each string in the braid connects the startpoint corresponding to $s^k(W)$ to the endpoint corresponding to $s^{k+1}(W)$. Fig. \ref{fig:lorbraid-word} exemplifies this procedure for $W=(LRRLR)^{\infty}$.

\begin{figure}
  \centering
  \includegraphics[scale=1.0]{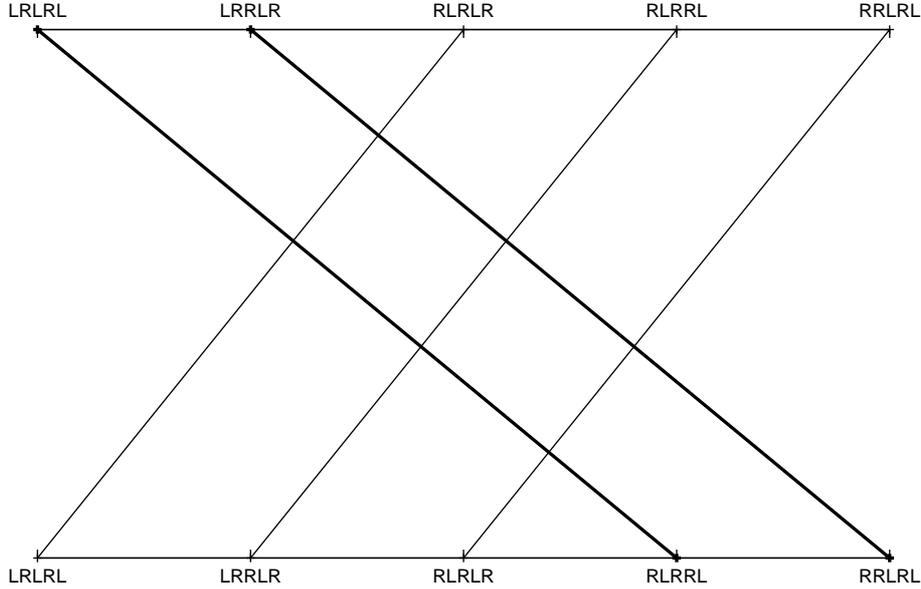}
  \caption{Lorenz braid corresponding to $W=(LRRLR)^{\infty}$}
  \label{fig:lorbraid-word}
\end{figure}

Each periodic orbit of the flow has a unique corresponding orbit in the Lorenz map, which in turn corresponds to the cyclic permutation class of one periodic word in the symbols $L,R$.

Sometimes we will refer to the knot represented by an $L$-maximal or an $R$-minimal word, meaning the knot corresponding to the associated periodic word.

The \emph{crossing number} is the smallest number of crossings in any diagram of a knot $K$. The \emph{braid index} is the smallest number of strings among braids whose closure is $K$.

A \emph{syllable} of a word is a subword of type $L^aR^b$ with maximal length.

The \emph{trip number} $t$ of a periodic word $W$ with least period $n$ is the smallest number of syllables of all its subwords with length $n$. 

The trip number of a Lorenz link is the sum of the trip numbers of its components. Franks and Williams \cite{Franks87}, followed by Waddington \cite{Waddington96}, proved that the braid index of a Lorenz knot is equal to its trip number. This result had previously been conjectured by Birman and Williams \cite{Birman83}.

\section{Syllable permutations of torus knots words}
\label{sec:torsylperm}

It was proved in \cite{Birman83} that all torus knots are Lorenz knots. The torus knot $T(p,q)$ is the closure of a Lorenz braid in $n=p+q$ strings, with $p$ left or $L$ strings that cross over $q$ right or $R$ strings, such that each $L$ string crosses over all the $R$ strings. This Lorenz braid of a torus knot thus has the maximum number of crossings ($pq$) for a Lorenz braid with $p$ $L$ strings and $q$ $R$ strings. Since $T(q,p)=T(p,q)$ we will only consider torus knots $T(p,q)$ with $p<q$. 
The structure of the Lorenz braid of a torus knot $T(p,q)$ ($p<q$) is sketched in Fig. \ref{fig:torbraid}, where only the first and last $L$ strings and some of the $R$ strings are drawn. The remaining $L$ ($R$) strings are parallel to the $L$ ($R$) strings shown.

\begin{figure}
  \centering
  \includegraphics{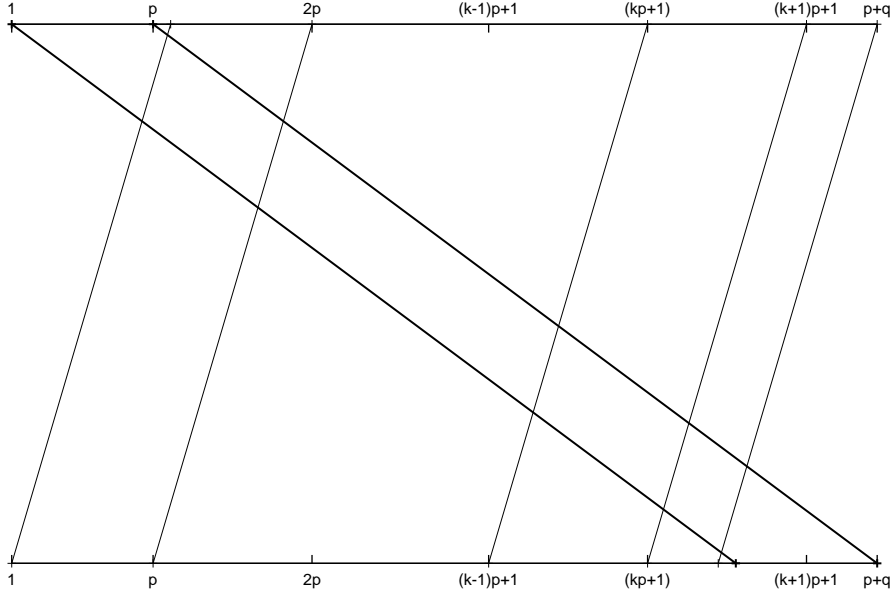}
  \caption{Lorenz braid of $T(p,q) (p<q)$}
  \label{fig:torbraid}
\end{figure}

Lorenz knots corresponding to orbits in the Lorenz template which are represented by evenly distributed words in the alphabet $\{L,R\}$ are torus knots \cite{Birman83}.  On the other hand, given a torus knot $T(p,q)$ there is an evenly distributed word with $n_L=p$, $n_R=q$, that represents it. There is thus a bijection between torus knots and cyclic permutation classes of evenly distributed words. We will call the $L-maximal$ word that represents $T(p,q)$ the \emph{standard word} $W(p,q)$ for $T(p,q)$.

\begin{defin}
  We define $P(p,q)$ as the set of $L$-maximal words resulting from permutations of syllables of the standard word $W(p,q)$ for $T(p,q)$.
\end{defin}

In \cite{PhysicaD} we proved the following results:

\begin{prop}\label{theor:unique}
  For each word $W$ in $P(p,q)$, $4<p<q$, distinct from $W(p,q)$, there is at most one torus knot $T(p,q'),\ q'<q$, with the same braid index and genus as the closure of the braid corresponding to $W$.
\end{prop}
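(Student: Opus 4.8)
The plan is to exploit the two knot invariants that are completely determined by combinatorial data of the Lorenz braid: the braid index, which by Franks--Williams and Waddington equals the trip number, and the genus, which for the closure of a positive braid on $n$ strands with $c$ crossings and $k$ Seifert circles is $g = (c - n + 2 - 2\cdot 0)/2$ in the one-component case, i.e.\ $2g = c - n + 1$. Since every word $W$ in $P(p,q)$ is obtained from the standard word $W(p,q)$ by permuting its syllables, $W$ has the same length $n = p+q$, the same $n_L = p$ and $n_R = q$, and the same number of syllables, hence the same trip number $t$; so the braid index of the closure of $W$ is the same as that of $T(p,q)$, namely $t$. First I would pin down exactly which torus knots $T(p',q')$ have braid index $t$: the braid index of $T(p',q')$ (with $p'<q'$) is $p'$, so we need $p' = t$, and then $q'$ is the only remaining free parameter. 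This already reduces the problem to counting how many values of $q'$ can occur.

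Next I would compute the genus of the closure of the braid of $W$. The number of crossings $c(W)$ of the Lorenz braid associated to a word is a sum over syllables: a syllable $L^aR^b$ contributes crossings according to how its $L$-block overtakes $R$-strings, and for an evenly distributed word the total is the maximal $pq$, whereas a syllable permutation generally lowers this. I would write $c(W)$ explicitly in terms of the syllable lengths and their cyclic arrangement, using the correspondence between shifts of $W$, their lexicographic order, and the start/endpoint assignment described in the excerpt. Then $2g(W) = c(W) - (p+q) + 1$. On the torus-knot side, $T(t,q')$ has genus $(t-1)(q'-1)/2$. Equating genera gives $q'$ as an explicit (affine) function of $c(W)$ and $t$: roughly $q' - 1 = (c(W) - p - q + 1)/(t-1)$. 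Since $c(W)$ is determined by $W$, this shows $q'$ is \emph{uniquely} determined by $W$ — there is at most one candidate torus knot — which is exactly the assertion, once one also checks $q' < q$ (this follows because $c(W) < pq = c(W(p,q))$ whenever $W \neq W(p,q)$, as a non-trivial syllable permutation strictly decreases the crossing number, together with the constraint $t \le p$).

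I would organize the write-up as: (i) recall trip number $=$ braid index and note invariance of $t$ under syllable permutation; (ii) recall the positive-braid genus formula and hence that genus is an affine function of the crossing number $c(W)$; (iii) recall the genus and braid index of $T(p',q')$; (iv) solve the resulting two equations $p' = t$, $(t-1)(q'-1) = c(W) - p - q + 1$ for the unique pair $(p',q')$, and verify $q' < q$ using $c(W) < pq$. The uniqueness of $(p',q')$ given $W$ is then immediate; the ``at most one'' in the statement accounts for the possibility that the integer $q'$ so obtained does not actually satisfy the divisibility/positivity constraints, in which case there is none.

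The main obstacle I anticipate is step (ii)--(iv): getting a clean, correct formula for $c(W)$ (the crossing number of the Lorenz braid of an arbitrary syllable permutation) and making sure the genus of the \emph{closure} is read off correctly — in particular confirming that the closure is still a knot (one cycle) with the expected Seifert genus, so that $2g = c - n + 1$ applies without a correction term. Once $c(W)$ is correctly expressed, the arithmetic forcing $q'$ to be unique is routine; the subtlety is entirely in the bookkeeping of crossings under syllable permutation and in checking $c(W) < pq$ for $W \neq W(p,q)$.
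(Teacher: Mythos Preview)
The paper does not actually prove this proposition here; it is quoted from the companion paper \cite{PhysicaD}, so there is no in-text proof to compare against. I can therefore only assess your plan on its own merits.

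Your core idea is correct and suffices: the braid index of the closure of any $W\in P(p,q)$ equals its trip number $t=p$ (syllable permutations preserve the syllable count), and the genus of $T(p,q')$ is $(p-1)(q'-1)/2$, which is strictly increasing in $q'$. Hence at most one value of $q'$ can match the genus of the closure of $W$, regardless of what that genus is. That is already the full content of the ``at most one'' assertion.

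Where you overcommit is in steps (ii) and (iv). You propose to derive an explicit formula for the Lorenz-braid crossing number $c(W)$ of an arbitrary syllable permutation, and then to solve for $q'$ and check $q'<q$ via $c(W)<pq$. None of this is needed for the proposition as stated: uniqueness of $q'$ follows purely from the injectivity of $q'\mapsto (p-1)(q'-1)/2$, without ever computing the genus of the closure of $W$. The inequality $q'<q$ is a hypothesis restricting the search space, not a conclusion you must establish; if no $q'<q$ matches, the ``at most one'' claim holds vacuously. So the obstacle you flag (clean bookkeeping for $c(W)$, and the strict inequality $c(W)<pq$ for $W\neq W(p,q)$) is real work, but it is work toward a stronger statement than the one you are asked to prove.

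Your caution about confirming the closure is a knot (so that $2g=c-n+1$ holds without a component correction) is well placed in principle, but again becomes irrelevant once you observe that the argument never needs the actual value of $g(W)$, only the monotonicity of the torus-knot genus in $q'$.
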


\begin{prop}\label{prop:notorus1}
  For each odd integer $p>4$ and all integer $k>0$, the sets $P(p,q)$, for $q=kp+2$ and $q=(k+1)p-2$ contain no words corresponding to braids whose closure is a torus knot, besides $W(p,q)$.
\end{prop}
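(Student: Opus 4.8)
The plan is to reduce the statement, via Proposition \ref{theor:unique}, to a genus comparison, and then to compute the genus of each non-standard word in $P(p,q)$ directly from its Lorenz braid. First I would note that every $W\in P(p,q)$ is a permutation of the syllables of $W(p,q)$, so it has $p$ symbols $L$ and $q$ symbols $R$; and for $q=kp+2$ or $q=(k+1)p-2$ with $p>4$ and $k>0$ one has $\lfloor q/p\rfloor\ge 1$, so every syllable of $W(p,q)$, hence of any $W\in P(p,q)$, has the form $LR^{b}$ with $b\ge 1$. Consequently every full-period subword of $W$ splits into at least $p$ syllables, with equality at the syllable boundaries, so the trip number of $W$ is $p$ and, by Franks--Williams and Waddington, its braid index is $p$. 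Thus if the closure of the braid of $W$ is a torus knot $T(a,b)$ with $a<b$ then $a=p$, and by Proposition \ref{theor:unique} (together with the fact that for $W\neq W(p,q)$ the braid has fewer than $pq$ crossings, hence smaller genus than $T(p,q)$) this torus knot must be $T(p,q')$ for some $q'$ with $p<q'<q$ and $\gcd(p,q')=1$, of genus $\tfrac12(p-1)(q'-1)\le\tfrac12(p-1)(q-2)$. So it suffices to show that for $W\neq W(p,q)$ the genus of the closure of the braid of $W$ is strictly larger than $\tfrac12(p-1)(q-2)$.

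To carry this out I would first make $P(p,q)$ explicit. In both families $q\equiv\pm2\pmod p$, so $W(p,q)$ consists of $p-2$ identical syllables together with exactly two distinguished ones (two long blocks $LR^{\lceil q/p\rceil}$ when $q=kp+2$, two short blocks $LR^{\lfloor q/p\rfloor}$ when $q=(k+1)p-2$). A cyclic arrangement of $p$ syllables, only two of which are distinguished, is determined by the cyclic distance $d\in\{1,\dots,(p-1)/2\}$ between the two distinguished syllables, and each such $d$ has a unique $L$-maximal representative $W_d$; the evenly distributed word $W(p,q)$ is $W_{d_0}$ with $d_0=(p-1)/2$. Hence $P(p,q)=\{W_1,\dots,W_{(p-1)/2}\}$, and only $W_1,\dots,W_{(p-3)/2}$ remain.

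Since a Lorenz braid is a positive braid, its closure is fibered and the surface produced by Seifert's algorithm on the braid has minimal genus; hence the genus of the closure of the Lorenz braid of $W_d$ equals $\tfrac12\bigl(c(W_d)-(p+q)+1\bigr)$, where $c(W_d)$ is its number of crossings, and for the standard word $c(W(p,q))=pq$, recovering $\tfrac12(p-1)(q-1)$. The crucial step is to bound the crossing defect $\delta(W_d):=pq-c(W_d)$. Ordering the $p+q$ cyclic shifts of $W_d$ lexicographically, $c(W_d)$ counts the pairs (shift beginning with $L$, shift beginning with $R$) whose images under the shift map remain in the correct order, and $\delta(W_d)$ counts the order-reversals created by moving the two distinguished syllables away from the balanced position. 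A combinatorial analysis of these comparisons, of the kind carried out in \cite{PhysicaD}, shows that the reversals are confined to a bounded neighbourhood of the two distinguished syllables and, in fact, that $\delta(W_d)=p-1-2d$ for $q=kp+2$ (with an analogous value strictly between $0$ and $p-1$ when $q=(k+1)p-2$). Thus $0<\delta(W_d)<p-1$ for every $d\neq d_0$, and
\[ g(W_d)=\tfrac12\bigl((p-1)(q-1)-\delta(W_d)\bigr)>\tfrac12\bigl((p-1)(q-1)-(p-1)\bigr)=\tfrac12(p-1)(q-2), \]
which by the reduction above shows that the closure of the braid of $W_d$ is not a torus knot.

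The main obstacle is the crossing-defect estimate $0<\delta(W_d)<p-1$: one has to understand the lexicographic order of all $p+q$ shifts of the permuted word $W_d$ well enough to count, or at least to bound strictly below $p-1$, the order-reversals produced by displacing the two distinguished syllables, and to verify that this bound holds uniformly in $k$ (the interesting point being that the defect is independent of the common block length). Once this localization is available the remaining step is immediate; Proposition \ref{theor:unique} is used only to guarantee that any torus knot sharing the braid index and genus of $W_d$ must be some $T(p,q')$ with $q'<q$, which the strict genus inequality then excludes.
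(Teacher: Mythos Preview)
First, note that the present paper does not actually prove Proposition~\ref{prop:notorus1}: it is quoted verbatim as a result established in \cite{PhysicaD}, so there is no in-paper proof to compare against. Your outline is, in fact, a reasonable reconstruction of the kind of argument one expects in \cite{PhysicaD}: the reduction via Franks--Williams/Waddington to a genus comparison, the fibered Seifert surface of a positive braid giving $g=\tfrac12(c-(p+q)+1)$, and your explicit parametrisation of $P(p,q)$ as $\{W_1,\dots,W_{(p-1)/2}\}$ indexed by the cyclic gap $d$ between the two distinguished syllables are all correct and to the point.

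The genuine gap is exactly the step you yourself label ``the main obstacle'': the inequality $0<\delta(W_d)<p-1$. You do not prove it; you assert a closed formula $\delta(W_d)=p-1-2d$ for $q=kp+2$, say only ``an analogous value'' for $q=(k+1)p-2$, and justify both by pointing to ``a combinatorial analysis of the kind carried out in \cite{PhysicaD}''. But that analysis \emph{is} the substance of the proposition you are asked to prove, so the reference is circular. What is missing concretely is a direct count (or a sharp two-sided bound, uniform in $k$) of the pairs $(A,B)$ of cyclic shifts with $A_0=L$, $B_0=R$ and $s(A)<s(B)$; this requires tracking how moving one of the two distinguished syllables off the balanced position perturbs the lexicographic ranks of the nearby shifts, and checking that the effect is independent of the common block length $k$. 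Until that computation is written out, what you have is a correct strategy and a correct target inequality, but not a proof. (A minor side remark: your appeal to Proposition~\ref{theor:unique} is harmless but superfluous---$q'<q$ already follows from $c(W)<pq$ together with the monotonicity of $g(T(p,q'))$ in $q'$, and you never use the ``at most one'' clause.)
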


\begin{prop}\label{prop:notorus2}
  If $p>4$ is even and not a multiple of $3$, then for any integer $k$ the sets $P(p,kp+3)$ and $P(p,(k+1)p-3)$ contain no words corresponding to braids whose closure is a torus knot. Also, if $p<12$ and $p$ is even, then $P(p,q)$ contains no words other than $W(p,q)$ corresponding to torus knots.
\end{prop}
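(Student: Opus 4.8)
The plan is to obstruct the torus-knot property of the closure $\hat W$ of a nonstandard word $W\in P(p,q)$ by combining its two computable invariants --- braid index and genus --- and then to invoke Proposition~\ref{theor:unique} to pin down the unique surviving candidate and eliminate it. Write $q=kp+3$ or $q=(k+1)p-3$ with $k\ge 1$; since $3\nmid p$ we have $\gcd(p,q)=1$, and because one of the two syllable exponents of $W(p,q)$ occurs exactly three times while $3\nmid p$, no syllable permutation of $W(p,q)$ can have period smaller than $n=p+q$, so every element of $P(p,q)$ is a knot. Moreover, since $p<q$ every syllable of $W(p,q)$ has the form $LR^{a}$ with $a\ge 1$; hence a permutation of the syllables again decomposes into exactly $p$ syllables, each of its $n$ cyclic shifts has at least $p$ syllables, and $p$ is attained, so $W$ has trip number $p$. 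By Franks--Williams \cite{Franks87} $\hat W$ then has braid index $p$, so if $\hat W$ is a torus knot it is $T(p,q')$ with $p<q'$ and $\gcd(p,q')=1$. As $p$ is even, $q'$ is odd and the genus of $T(p,q')$ equals $(p-1)\tfrac{q'-1}{2}$, a multiple of $p-1$; by Proposition~\ref{theor:unique} such a $q'$ is unique if it exists, and then $q'<q$.

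Next I would compute $g(\hat W)$ directly from the Lorenz braid of $W$. This braid has $n=p+q$ strands and only positive crossings, so Seifert's algorithm applied to its closure yields a fiber surface, hence one of minimal genus, giving $g(\hat W)=\tfrac12\bigl(c(W)-n+1\bigr)$; and a direct count shows the number of crossings is $c(W)=\sum_{i=1}^{p}(e_i-i)$, where $e_1<\dots<e_p$ are the positions at which the $p$ over-crossing ($L$) strands land. The maximum $e_i=q+i$ (so $c=pq$) occurs exactly at $W(p,q)$, so writing $\delta(W):=pq-c(W)\ge 0$ for the crossing defect we get $g(\hat W)=g(W(p,q))-\tfrac12\delta(W)$, with $g(W(p,q))=\tfrac{(p-1)(q-1)}{2}$. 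Combining with the first paragraph: if $\hat W$ is a torus knot then $\delta(W)=(p-1)(q-q')$ with $q-q'$ a positive even integer, so $\delta(W)$ is a positive multiple of $2(p-1)$ and the candidate of Proposition~\ref{theor:unique} must be $T\!\left(p,\,q-\tfrac{\delta(W)}{p-1}\right)$.

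It remains to rule this out when $W\neq W(p,q)$, which is the crux. Since $W(p,q)$ is evenly distributed its syllable exponents take only two consecutive values, three of them equal to one of the values and $p-3$ to the other, so an element of $P(p,q)$ is encoded by a cyclic binary necklace of length $p$ with exactly three distinguished beads, and $\delta(W)$ becomes an explicit function of that necklace. Tracking the lexicographic order of the $n$ cyclic shifts identifies each $e_i$ with the lexicographic rank, among the $p$ cyclic suffixes of the exponent sequence, of the suffix following the corresponding syllable; this rewrites $\delta(W)$ as a sum of three such ranks minus the constant that makes $\delta(W(p,q))=0$. The main obstacle is to show that, over all necklaces other than the balanced one, $\delta(W)$ is never a positive multiple of $2(p-1)$ --- and, in any residual case where it could be, that $q':=q-\delta(W)/(p-1)$ fails $q'>p$ or $\gcd(p,q')=1$, contradicting Proposition~\ref{theor:unique}. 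This is where the hypotheses are used: $3\nmid p$ (equivalently, the shift-by-one permutation of $\mathbb{Z}/p\mathbb{Z}$ has no invariant three-element set, and no ties occur in the lexicographic order of the relevant suffixes) restricts both the size and the residue of $\delta(W)$ modulo $2(p-1)$; $p>4$ makes $p-1$ large enough to exclude the small admissible defects; and $p$ even is what forces $q'$ odd, hence the genus a multiple of $p-1$ rather than of $\tfrac{p-1}{2}$. The case $q=(k+1)p-3$ is handled in the same way with the roles of the two syllable lengths interchanged.

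Finally, for the secondary assertion ($p<12$ even): if $p\in\{2,4,6\}$, coprimality of $p$ and $q$ forces one syllable exponent to occur with multiplicity $1$ and the other with multiplicity $p-1$, so every syllable permutation is a cyclic rotation and $P(p,q)=\{W(p,q)\}$; and if $p\in\{8,10\}$, the only coprime $q$ for which $P(p,q)\neq\{W(p,q)\}$ are exactly those with $q\equiv\pm3\pmod p$, which are covered by the first part.
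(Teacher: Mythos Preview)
The paper does not itself prove this proposition; it is quoted verbatim from \cite{PhysicaD}, so there is no ``paper's own proof'' to compare against beyond noting that the strategy in \cite{PhysicaD} is indeed the one you outline: match the braid index (via the trip number) and the genus (via the positive--braid fiber surface), and then use Proposition~\ref{theor:unique} to reduce to a single candidate $T(p,q')$.

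That said, your proposal is not a proof but a plan, and the plan stops exactly at the hard step. Everything up to and including the reduction
\[
\hat W \text{ torus} \ \Longrightarrow\ \delta(W)=(p-1)(q-q')\in 2(p-1)\mathbb{Z}_{>0}
\]
is correct and cleanly argued (the aperiodicity of syllable permutations when $3\nmid p$, the identification $t(W)=p$, the parity argument forcing $q-q'$ even, and the $\delta(W)=0\Leftrightarrow W=W(p,q)$ observation are all fine). But the sentence ``The main obstacle is to show that, over all necklaces other than the balanced one, $\delta(W)$ is never a positive multiple of $2(p-1)$ \ldots'' is followed only by a description of \emph{why} the hypotheses $p>4$, $2\mid p$, $3\nmid p$ are relevant, not by an actual computation or bound. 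You never establish the range of $\delta(W)$ over $P(p,q)$, never identify which values it hits, and never check the residual cases against $q'>p$ and $\gcd(p,q')=1$. Without that, nothing has been proved: the genus/braid-index constraints alone do not exclude $\hat W\cong T(p,q')$, and Proposition~\ref{theor:unique} only tells you the candidate is unique, not that it is wrong. This is precisely the content of the argument in \cite{PhysicaD}, and it requires a genuine case analysis of the three-bead necklaces that you have set up but not executed.

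Your treatment of the second clause is essentially complete: for $p\in\{2,4,6\}$ the coprimality forces the rare syllable to occur once, so $P(p,q)$ is a single cyclic class; for $p\in\{8,10\}$ the only nontrivial $P(p,q)$ occur at $q\equiv\pm 3\pmod p$, which feeds back into the first clause. But since that first clause is where the gap lies, the second clause is only conditionally established.
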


In \cite{PhysicaD} we derived an algorithm, based in the work of El-Rifai, \cite{Elrifai99}, to obtain Lorenz satellite braids, together with their corresponding aperiodic words and proved that none of the words in the sets $P(p,q)$ can be obtained through this procedure. Thus concluding the following result.

\begin{prop}\label{satellite}
 If Morton's conjecture is true then the Lorenz knots corresponding to syllable permutations of standard torus words, that is, the knots corresponding to words in the sets $P(p,q)$, are not satellites.
 \end{prop}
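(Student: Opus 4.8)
The plan is to derive the statement by feeding Morton's conjecture into the combinatorial machinery already set up in \cite{PhysicaD}. First I would record that, \emph{assuming Morton's conjecture}, a Lorenz knot that is a satellite is in particular a cabling on a Lorenz companion knot, i.e. a satellite whose pattern is a torus knot embedded in a tubular neighbourhood of some Lorenz knot; so it is enough to show that no knot coded by a word of $P(p,q)$ arises this way. Next I would invoke the algorithm of \cite{PhysicaD}, built on El-Rifai's description of the Lorenz braids of satellite Lorenz links: it produces precisely the Lorenz braids of cablings on Lorenz knots together with the symbolic words coding them, the cabling appearing in the Lorenz braid as a block-substitution (renormalization) structure imposed on the companion's word. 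This reduces the proposition to the purely symbolic fact already proved in \cite{PhysicaD}, namely that no element of $P(p,q)$ lies in the output of that algorithm, and the proof is then assembled as: satellite $\Rightarrow$ cabling on a Lorenz knot $\Rightarrow$ the coding word is one of those generated by the algorithm $\Rightarrow$ the word does not belong to $P(p,q)$.

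The step that requires care — and the one I expect to be the main obstacle — is the passage from the level of coding words to the level of knot types. A priori a single knot can admit several Lorenz codings, so one must exclude the scenario in which a knot is coded both by a word of $P(p,q)$ and, simultaneously, by some other Lorenz word displaying the cabling pattern, which would make it a satellite without any $P(p,q)$ word detecting it. I would resolve this by using that El-Rifai's characterisation, on which the algorithm rests, is a statement about the Lorenz braid of the knot itself: the essential cabling annulus forces \emph{every} Lorenz presentation of such a knot into the standard cabled form, hence forces its coding word into the algorithm's output. Thus if the knot of some $W\in P(p,q)$ were a cabling on a Lorenz knot, $W$ itself would be of the form produced by the algorithm, contradicting \cite{PhysicaD}.

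As a safeguard I would also note that one can cross-check this obstruction with numerical invariants read directly off the Lorenz braid — the trip number, which equals the braid index by Franks--Williams and Waddington, and the genus obtained from the positive braid via Bennequin's formula — since a nontrivial cabling on a nontrivial Lorenz companion imposes multiplicative constraints on braid index and genus that are incompatible with the values forced by the evenly distributed word $W(p,q)$ and its syllable permutations; but the clean route is the direct one above, and this serves only to confirm it.
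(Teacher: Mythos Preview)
Your proposal is correct and follows the same line as the paper: the paper does not give a separate proof for this proposition but records it as a direct consequence of \cite{PhysicaD}, namely that Morton's conjecture reduces ``satellite'' to ``cabling on a Lorenz knot'', the El-Rifai--based algorithm of \cite{PhysicaD} enumerates the words coding such cablings, and it is shown there that no word of $P(p,q)$ arises. Your discussion of the multiple-codings issue is more explicit than anything the paper writes here (it simply defers that point to \cite{PhysicaD} and \cite{Elrifai99}), but the underlying argument is the same.
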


So we can conclude from Propositions \ref{prop:notorus1} and \ref{prop:notorus2} that, if Morton's conjecture is true, then the words in sets $P(p,kp+2)$, $P(p,(k+1)p-2)$ for $p$ odd and $P(p,kp+3)$, $P(p,(k+1)p-3)$, for $p$ even and $p$ not a multiple of $3$, distinct from the standard $W(p,q)$ torus word, correspond to hyperbolic Lorenz knots.

 Moreover, we have recently performed an extensive computational test \cite{Gomes14}, in which we computed the volumes of all knot complements corresponding to words in the (non-empty) sets $P(p,q)$ with $5\leq p \leq 19$ and $6 \leq q \leq 100$. We found all of them to be hyperbolic, with the expected exception of the torus knots $T(p,q)$ corresponding to the standard words $W(p,q)$.

\section{Farey pairs}

In \cite{DCDS} it was introduced one operation over Lorenz links that is directly related with renormalization of Lorenz maps. Generically, Lorenz maps are one-dimensional maps, $g:[-1,1]\rightarrow [-1,1]$,  with one single discontinuity at $0$, increasing in both continuity intervals and such that $g(\pm 1)=\pm 1$. In particular the first-return map from the original Lorenz template is a Lorenz map in this sense, with the particularity of being surjective in both continuity intervals. If the critical orbits are finite, then Lorenz maps generate sub-Lorenz templates, see \cite{GHS} and \cite{ChSF}. These templates are contained in the Lorenz template, so all knots in them are Lorenz knots. The combinatorics of a Lorenz map $g$, such as the corresponding sub-Lorenz template, are completely determined by its kneading invariant, i.e., by the pair $(X,Y)$, where $X=Li_g(\lim_{x\rightarrow 0^-}g(x))$ and $Y=Ri_g(\lim_{x\rightarrow 0^+}g(x))$ are the critical itineraries.

We say that a pair $(X,Y)\in \Sigma \times \Sigma$ is admissible if it is the kneading invariant of some Lorenz map $g$. 

In \cite{SSR} it was proved the following result.

\begin{prop} A pair $(X,Y)\in \Sigma ^2$ is admissible if and only if the following conditions are verified:
\begin{enumerate}
\item For any $Z\in \lbrace X,Y \rbrace$, if $Z_i=L$ then $\sigma^i(Z)\leq X$.
\item For any $Z\in \lbrace X,Y \rbrace$, if $Z_i=R$ then $\sigma^i(Z)\geq Y$.
\item The previous inequalities are strict if any of the words involved is finite.
\end{enumerate}
\end{prop}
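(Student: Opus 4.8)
The plan is to prove the two implications separately. Throughout, the workhorse is the monotonicity of itineraries: for a Lorenz map $g$, if $a\le b$ lie in the domain then $i_g(a)\le i_g(b)$ in the lexicographic order -- proved by induction on the first coordinate at which the two itineraries disagree, using that $g$ is increasing on each of $[-1,0)$ and $(0,1]$ and that any negative point has first itinerary symbol $L<R$ -- and moreover, since $g$ is \emph{strictly} increasing on each branch, $a<b$ forces $i_g(a)<i_g(b)$ whenever one of the two itineraries is finite (the orbits must separate at the step where one of them lands on $0$).

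\textbf{Necessity.} Assume $(X,Y)$ is the kneading invariant of a Lorenz map $g$, so $X=L\,i_g(c^-)$, $Y=R\,i_g(c^+)$ with $c^\pm=\lim_{x\to 0^\pm}g(x)$. The second ingredient is geometric: every $z<0$ in the domain satisfies $g(z)\le c^-$ and every $z>0$ satisfies $g(z)\ge c^+$, strictly if $z\neq 0$. Now let $Z\in\{X,Y\}$ with $Z_i=L$; the case $i=0$ is trivial (and vacuous unless $Z=X$), so suppose $i\ge 1$. Then $Z_i=L$ says the point $w=g^{i-1}(c^\pm)$ (the sign matching $Z$) is negative, hence $g(w)\le c^-$, hence by the monotonicity lemma and one shift $\sigma^{i+1}(Z)=i_g(g(w))\le i_g(c^-)=\sigma(X)$; since $\sigma^i(Z)$ and $X$ both begin with $L$, this yields $\sigma^i(Z)\le X$. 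The statement for $Z_i=R$ is symmetric, and condition 3 is exactly what the strict form of the monotonicity lemma delivers.

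\textbf{Sufficiency.} This is the direction I expect to be the real work. Given an abstract pair $(X,Y)$ satisfying 1--3, I would reconstruct a Lorenz map having it as kneading invariant. Let $A$ be the set of all admissible itineraries relative to $(X,Y)$, i.e. all (finite or infinite) words $W$ with $W_i=L\Rightarrow\sigma^i(W)\le X$ and $W_i=R\Rightarrow\sigma^i(W)\ge Y$ for every $i$, with the strictness of condition 3 imposed; by hypothesis $A$ contains $X$ and $Y$, and it is closed under the shift. Order $A$ lexicographically and construct an order-preserving surjection $h\colon A\to[-1,1]$ -- for instance by assigning strictly positive lengths to cylinders as in the Milnor--Thurston kneading construction, collapsing pairs of words that are forced to coincide -- and then define $g$ on $h(A)$ by $g(h(W))=h(\sigma(W))$ for $W$ not starting with $0$. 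Conditions 1 and 2 applied to arbitrary $W\in A$ are precisely what is needed to check that $g$ is well defined and increasing on $h(A)\cap[-1,0)$ and on $h(A)\cap(0,1]$; conditions 1 and 2 applied to $X$ and $Y$ \emph{themselves} -- i.e. the $L$-maximality of $X$ and the $R$-minimality of $Y$ together with their mutual compatibility -- guarantee that the one-sided limits of $g$ at $0$ have itineraries $\sigma(X)$ and $\sigma(Y)$, so that $g$ has kneading invariant exactly $(X,Y)$ and not some smaller pair. Finally one extends $g$ monotonically and continuously over the remaining points of each branch with $g(\pm 1)=\pm 1$. The two delicate points will be (i) making the collapsing in the definition of $h$ match exactly the places where conditions 1--3 permit equality, and (ii) ruling out that the construction secretly imposes a constraint stronger than 1--3, which is where the assumed maximality/minimality of $X$ and $Y$ is essential.
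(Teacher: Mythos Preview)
The paper does not prove this proposition at all: it is quoted as a result from \cite{SSR} (Silva and Sousa Ramos), with no argument given in the present paper. So there is no ``paper's own proof'' to compare your attempt against.

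As for the attempt itself: the necessity direction is the standard argument and is fine. The sufficiency direction is only a plan, not a proof---you describe the Milnor--Thurston style construction and then explicitly flag the two genuinely nontrivial steps (the collapsing in $h$ and the verification that the resulting map has exactly $(X,Y)$ as kneading invariant rather than a strictly smaller pair) without carrying them out. That outline is the right one, but as written it is a roadmap rather than a proof; if you actually need this result you should either fill in those steps or, as the paper does, simply cite \cite{SSR}.
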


For an admissible pair of finite words $(X,Y)\in \Sigma \times \Sigma$ and a finite word $S\in \Sigma$, we define the $*$-product 
$$(X,Y)*S=\overline{S_0}\ldots \overline{S_{|S|-1}}0,$$
where 
$$\overline{S_j}=\left\lbrace
\begin{array}{ll}
X_0\ldots X_{|X|-1} & \text{ if } S_j =L \\
Y_0\ldots Y_{|Y|-1} & \text{ if } S_j =R
\end{array}
\right.
$$

Words of type $(X,Y)*S$ are the itineraries of points in the renormalization intervals of renormalizable Lorenz maps, and in \cite{DCDS} it was studied the structure of their corresponding Lorenz knots, as a geometric construction depending on the Lorenz link defined by the pair $(X,Y)$ and on the Lorenz knot defined by $S$.

On the other hand \cite{TW}, evenly distributed words in $\Sigma$ are exactly those that can not be written as $(X,Y)*S$ for some admissible pair $(X,Y)$, so torus knots correspond to words that are irreducible relatively to the $*$-product and the hyperbolic and satellite Lorenz knots are generated under the geometric construction derived from the $*$-product. 

The $L$-maximal and $R$-minimal evenly distributed words can be generated recursively in the Symbolic Farey trees constructed below.

First we define the $L$-maximal symbolic Farey tree, $\mathcal{F}^-=\cup_{i=0}^{\infty}\mathcal{F}^-_i$, where  $\mathcal{F}_0^-=\left\lbrace L0 \right\rbrace$ and, for all $n$,
$$\begin{array}{c}
\mathcal{F}^ -_{n+1}= \mathcal{F}^-_n\cup\lbrace LR^{n+1}0\rbrace \cup\\
 \left\lbrace Y_0\ldots Y_{|Y|-1}X_0\ldots X_{|X|-1}0 : X <Y \text{ are consecutive words in }\mathcal{F}^-_n \right\rangle.
\end{array}
$$

Now we say that two $L$-maximal words $X<Y$ are Farey  neighbours if there is some $n$ such that they are consecutive words in $\mathcal{F}^-_n$.

So we have $\mathcal{F}^-$:
$$\begin{array}{ccccccccc}
 L0 &     &    &      &     &      &    & &     \\
    &     &    &      & LR0 &      &    & &      \\
    &     &LRL0&      &     &      &LRR0& &      \\
    &LRLL0&    &LRLRL0&     &LRRLR0&    &LRRR0 &  \\
    & \:\:\:  \vdots& &\:\:\:  \vdots & &\:\:\:  \vdots & &  \:\:\:  \vdots &
\end{array}
$$

We define analogously the $R$-minimal symbolic Farey tree, $\mathcal{F}^+=\cup_{i=0}^{\infty}\mathcal{F}^+_i$, where  $\mathcal{F}_0^+=\left\lbrace R0 \right\rbrace$ and, for all $n$,
$$\begin{array}{c}
\mathcal{F}^ +_{n+1}= \mathcal{F}^+_n\cup\lbrace RL^{n+1}0\rbrace \cup\\
 \left\lbrace X_0\ldots X_{|X|-1}Y_0\ldots Y_{|Y|-1}0 : X <Y \text{ are consecutive words in }\mathcal{F}^+_n \right\rangle.
\end{array}
$$

Finally we have $\mathcal{F}^+$:
$$\begin{array}{ccccccccc}
  &     &    &      &     &      &    & &R0     \\
    &     &    &      & RL0 &      &    & &      \\
    &     &RLL0&      &     &      &RLR0& &      \\
    &RLLL0&    &RLLRL0&     &RLRRL0&    &RLRR0  & \\
    &\:\:\:  \vdots & &\:\:\:  \vdots & & \:\:\:  \vdots& & \:\:\:  \vdots & 
\end{array}
$$

For a word $X\in\mathcal{F}^-$, we may define its $R$-minimal version $m(X)=\min\lbrace X_j\ldots X_{|X|-1}X_0\ldots X_{j-1}0 : X_j=R\rbrace $. It is immediate to observe that each word  $Y\in\mathcal{F}^+\setminus\mathcal{F}^+_0$ is obtained as $m(X)$ where $X\in\mathcal{F}^-\setminus\mathcal{F}^-_0$ is in the same position of $\mathcal{F}^-\setminus\mathcal{F}^-_0$.

\begin{defin}
A Farey pair is a pair $(X,Y)$ where $Y=m(S)$, $X,S\in \mathcal{F}^-$, and $X$ and $S$ are Farey neighbours with $S<X$.
\end{defin}

From the point of view of the genealogy of Lorenz words, see \cite{SSR}, words of type $(X,Y)*S$ where $(X,Y)$  are Farey pairs, are the "first" reducible words. In the rest of this paper, we will prove that, if Morton's conjecture is true, then  those words never correspond to satellite Lorenz knots and, for some specific families of Farey pairs $(X,Y)$ they all correspond to hyperbolic knots. However, the computational tests performed in \cite{Gomes14}, lead us to conjecture that all words of the referred type correspond to hyperbolic knots.

\begin{lem}\label{farneighadmiss}
  If $(X,Y)$ is a Farey pair, then $(X,Y)$ is admissible.
\end{lem}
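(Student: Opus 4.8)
To establish Lemma~\ref{farneighadmiss}, the plan is to check the admissibility criterion recalled above directly for $(X,Y)$; since $X$ and $Y$ are finite words the inequalities involved must be strict, and (the case $i=0$ being trivial) the task amounts to four implications: (a)~$X_i=L\Rightarrow\sigma^i(X)<X$; (b)~$X_i=R\Rightarrow\sigma^i(X)>Y$; (c)~$Y_i=L\Rightarrow\sigma^i(Y)<X$; (d)~$Y_i=R\Rightarrow\sigma^i(Y)>Y$. In each case any shift $\sigma^i$ with $i\ge1$ yields a word strictly shorter than $X$ or $Y$, so a non-strict inequality automatically upgrades to a strict one; then (a) is just the $L$-maximality of $X\in\mathcal{F}^-$. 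For (d) I would use that $Y=m(S)$ is by definition the lexicographically least finite word $\rho\,0$ with $\rho$ a cyclic rotation of $S$ beginning with $R$; since the order on finite words of a common length is preserved under $W0\mapsto W^{\infty}$, the word $Y^{\infty}$ is the least shift of $S^{\infty}$ beginning with $R$, hence is $R$-minimal, so that $Y$ itself --- being primitive --- is $R$-minimal, which is (d). The work thus lies in (b) and (c), and ultimately in a single monotonicity property of $m$.

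For (c), given $i$ with $Y_i=L$ I would compare $\sigma^i(Y)=Y_i\cdots Y_{|Y|-1}0$ with the full cyclic rotation $\rho=Y_i\cdots Y_{|Y|-1}Y_0\cdots Y_{i-1}0$ of $Y$ beginning at position $i$: the two agree along the block $Y_i\cdots Y_{|Y|-1}$ and then $\sigma^i(Y)$ carries a $0$ against the $Y_0=R$ of $\rho$, whence $\sigma^i(Y)<\rho$. Since $Y=m(S)$ has the same letters as $S$ up to cyclic rotation, $\rho$ is itself a cyclic rotation of $S$ followed by $0$, and it begins with $L$, so the $L$-maximality of $S$ gives $\rho\le S$; together with the hypothesis $S<X$ this yields $\sigma^i(Y)<\rho\le S<X$. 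For (b) I would argue symmetrically: for $i$ with $X_i=R$, comparing $\sigma^i(X)$ with the full cyclic rotation $\rho'=X_i\cdots X_{|X|-1}X_0\cdots X_{i-1}0$ and using $X_0=L$ (so now $0>L$) gives $\sigma^i(X)>\rho'$, and $\rho'$, beginning with $R$, satisfies $\rho'\ge m(X)$. Hence (b) follows once we know $m(X)\ge m(S)=Y$, that is, once we know that $m$ is order-preserving and apply this to $S<X$.

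Everything therefore reduces to the claim that $V<W$ in $\mathcal{F}^-\setminus\{L0\}$ forces $m(V)<m(W)$, and I expect this monotonicity to be the main obstacle. The plan for it is to combine two facts: first, the observation recorded just before the definition of Farey pair, that $m$ carries each word of $\mathcal{F}^-\setminus\{L0\}$ to the word at the corresponding node of $\mathcal{F}^+\setminus\{R0\}$; second, that both symbolic Farey trees are binary search trees, i.e.\ their in-order traversal lists the words in increasing order. The latter should follow by induction on the level from the recursive definitions --- for consecutive $A<B$ in $\mathcal{F}^-_n$ one checks $A<BA<B$, for consecutive $A<B$ in $\mathcal{F}^+_n$ one checks $A<AB<B$, while $LR^{n+1}0$ and $RL^{n+1}0$ enter as the new largest, resp.\ smallest, element of their level --- after which a bijection between two such trees sending each node to the node at the same address is automatically order-preserving. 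A more computational alternative would be to prove $m(V)<m(W)$ by induction on the first level at which $V$ and $W$ appear, distinguishing which of the two is the freshly inserted mediant and locating the rotation realizing $m$ inside the relevant concatenation; in either route the delicate point is to rule out that some $R$-rotation of the larger word drops below $m$ of the smaller. Granting this monotonicity, (b) and (c), and with them the lemma, follow as above. (Note that the argument uses only $S<X$, not the full strength of $X$ and $S$ being Farey neighbours.)
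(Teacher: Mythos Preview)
Your argument is correct and far more thorough than the paper's own proof, which consists of the single sentence ``The proof follows immediately from the construction of the symbolic Farey trees.'' In effect the paper leaves the lemma as an exercise, whereas you unpack the admissibility criterion into the four implications (a)--(d), dispose of (a) and (d) via the $L$-maximality of $X\in\mathcal{F}^-$ and the $R$-minimality of $Y=m(S)$, and reduce (b) and (c) to the single statement that $m$ is order-preserving on $\mathcal{F}^-$. That reduction is sound: comparing a truncated shift with the corresponding full cyclic rotation via $X_0=L<0$ (resp.\ $Y_0=R>0$) is exactly the right device, and since the first inequality in each chain is already strict, the strictness required for finite words comes for free. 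Your plan for the monotonicity of $m$ --- combining the positional correspondence between $\mathcal{F}^-\setminus\{L0\}$ and $\mathcal{F}^+\setminus\{R0\}$ recorded just before the definition of Farey pair with the binary-search-tree property of each symbolic Farey tree --- is also correct and routine to carry out by induction on the level. Your closing observation, that only $S<X$ with $S,X\in\mathcal{F}^-$ is actually used and not the full Farey-neighbour hypothesis, is a genuine (mild) strengthening the paper does not state.
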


\begin{proof}
  The proof follows immediately from the construction of the symbolic Farey trees.
\end{proof}

\begin{thm}\label{fareyneighstarprod}
Let $(X,Y)$ be a Farey pair such that $t(X) > 1$ and $t(Y) > 1$. Let $p_1 = \min \{n_L(X),n_R(X)\}$,  $q_1 = \max \{n_L(X),n_R(X)\}$, $p_2 = \min \{n_L(Y),n_R(Y)\}$, $q_2 = \max \{n_L(Y),n_R(Y)\}$, $q_1=kp_1+r_1\ (0<r_1<p_1)$, $q_2=kp_2+r_2\ (0<r_2<p_2)$. The Lorenz knots associated to $X$ and $Y$ are respectively the torus knots $T(p_1,q_1)$ and $T(p_2,q_2)$. Then, for any finite aperiodic word $S\in\Sigma$, $Z=(X,Y)*S$ is a nontrivial syllable permutation of the standard evenly distributed word associated to the torus knot $T(p,q)$, where $q=kp + r (1<r<p-1)$, where $p=n_L(S)p_1 + n_R(S)p_2 =$, $q=n_L(S)q_1 + n_R(S)q_2$, $r=n_L(S)r_1 + n_R(S)r_2$.
\end{thm}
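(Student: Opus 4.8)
\emph{Overall plan.} To prove that $Z=(X,Y)*S$ is a nontrivial syllable permutation of the standard word $W(p,q)$ it suffices to establish two things: that the cyclic word underlying $Z$ has exactly the same multiset of syllables as $W(p,q)$, and that $Z\ne W(p,q)$. The second point is cheap. By \cite{TW} the evenly distributed words are precisely those that are irreducible for the $*$-product, so $W(p,q)$ is $*$-irreducible, while $Z$ is by construction a nontrivial $*$-product and hence not evenly distributed; thus $Z\ne W(p,q)$, and once the syllable multiset is matched the permutation is automatically nontrivial. The arithmetic for $p,q,r$ and the bounds $1<r<p-1$ are also immediate: each letter $L$ of $S$ contributes a copy of the body of $X$ to $Z$ and each letter $R$ a copy of the body of $Y$, so $n_L(Z)=n_L(S)n_L(X)+n_R(S)n_L(Y)$, $n_R(Z)=n_L(S)n_R(X)+n_R(S)n_R(Y)$ and $|Z|=n_L(S)|X|+n_R(S)|Y|$; substituting $q_i=kp_i+r_i$ gives $\{n_L(Z),n_R(Z)\}=\{p,q\}$, $q=kp+r$ with $r=n_L(S)r_1+n_R(S)r_2$, and, since an aperiodic $S$ contains both letters (so $n_L(S),n_R(S)\ge 1$), the estimates $r\ge r_1+r_2\ge 2$ and $p-r\ge (p_1-r_1)+(p_2-r_2)\ge 2$. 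So the whole theorem reduces to one syllable-counting statement.

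\emph{Setting up the count.} Since $t(X)>1$ and $t(Y)>1$, neither $X$ nor the word $S^{\sharp}$ with $Y=m(S^{\sharp})$ is the word $LR0$, so in the symbolic Farey trees $X$ and $S^{\sharp}$ lie on the same side of $LR0$; hence $n_L(X)-n_R(X)$ and $n_L(Y)-n_R(Y)$ have the same sign and $Z$ lies in a single ``orientation''. I will treat the case $n_L>n_R$ for $X$, $Y$ (hence for $Z$); the case $n_L<n_R$ is entirely parallel, with the roles of $L$ and $R$, and of $L$-maximal and $R$-minimal words, interchanged. In the chosen orientation the two balanced syllables are $L^{k}R$ and $L^{k+1}R$, and since a syllable $L^{a}R$ contains exactly one $R$, the assertion ``the cyclic word of $Z$ has exactly $p=n_R(Z)$ syllables, each $L^{k}R$ or $L^{k+1}R$'' is equivalent to ``the cyclic word of $Z$ contains no factor $RR$ and all its $L$-runs have length $k$ or $k+1$''; once that is proved the number of long syllables is forced to equal $r$, because the $L$-exponents sum to $n_L(Z)=q=kp+r$ over $p$ syllables, which is exactly the multiset of $W(p,q)$. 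Here I would record the explicit description of the torus words $X$ and $Y$: each is a balanced word whose $L$'s form, cyclically, $n_R(\,\cdot\,)$ runs of lengths $k$ and $k+1$ with exactly $r_i$ runs of length $k+1$, and, crucially, with the \emph{same} $k=\lfloor q_i/p_i\rfloor$; I would also pin down the precise shape of their $L$-maximal, respectively $R$-minimal, linear representatives, in particular which cyclic $R$-run and which cyclic $L$-run get split between the two ends (for instance, the $L$-maximal form of $X$ begins with $L$ and ends with $L$, and one checks a similar normal form for $Y=m(S^{\sharp})$ from the minimality defining $m$).

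\emph{The core: block junctions.} Write $Z=\overline{S_0}\,\overline{S_1}\cdots\overline{S_{|S|-1}}0$, each block a copy of the body of $X$ or of $Y$. Inside a block there is no $RR$ and every $L$-run has length $k$ or $k+1$, by balancedness, so everything happens at the $|S|$ junctions $\overline{S_j}\,\overline{S_{j+1}}$ (cyclically). For the ``no $RR$'' part one must rule out $\overline{S_j}$ ending in $R$ and $\overline{S_{j+1}}$ beginning in $R$ at the same time; this uses the location of the split $R$-run in the $L$-maximal word $X$ and in the $R$-minimal word $Y$. For the ``$L$-runs'' part, a run straddling a junction occurs exactly when $\overline{S_{j+1}}$ is a copy of the body of $X$ (which begins with $L$): its length is then $c_j+b$, where $b$ is the length of the initial $L$-run of the linear word $X$ and $c_j$ the length of the final $L$-run of $\overline{S_j}$, and one needs $c_j+b\in\{k,k+1\}$, and likewise $c_j\in\{k,k+1\}$ when instead $\overline{S_{j+1}}$ begins with $R$. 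The mechanism that makes these fit is that $X$ and $S^{\sharp}$ are Farey \emph{neighbours}: the corresponding Stern--Brocot fractions satisfy a determinant-$1$ relation, and this forces the place at which the ``defect'' $L$-run of the cyclic balanced word is cut in $X$ to be complementary to the places at which it is cut in the preceding blocks, so that the $L$'s left over at a block's end are precisely what the next block's opening $L$-run needs. Granting this, all $L$-runs of $Z$ lie in $\{k,k+1\}$, $Z$ has the syllable multiset of $W(p,q)$, and with the first paragraph the proof is complete.

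\emph{Main obstacle.} The delicate step is exactly the last one — coordinating the split $L$- and $R$-runs at the block boundaries — and it is the only place where one uses more than admissibility of $(X,Y)$. I would isolate it as a lemma about a single Farey pair $(X,Y)$: a statement purely about the positions, inside the balanced cyclic words, of the length-$k$ runs relative to the basepoints selected by $L$-maximality and by $R$-minimality, guaranteeing that any concatenation of copies of the bodies of $X$ and $Y$ (in any pattern $S$) produces neither a factor $RR$ nor an $L$-run outside $\{k,k+1\}$. The natural route is induction along the recursive mediant construction of $\mathcal F^{-}$: the mediant of two Farey neighbours is their concatenation, under which these split points transform predictably, so the elementary base cases propagate. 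The remaining work — the $n_L<n_R$ case and the bookkeeping identifying the resulting multiset with that of $W(p,q)$ — is then routine.
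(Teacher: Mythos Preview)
Your overall architecture is the paper's: reduce to showing that the cyclic word of $Z$ uses only the two syllable types $RL^{k}$, $RL^{k+1}$ (or $LR^{k}$, $LR^{k+1}$ in the other orientation), invoke $*$-reducibility versus irreducibility of evenly distributed words for nontriviality, and do the arithmetic for $p,q,r$. Your treatment of the two orientations via the position relative to $LR0$ is also exactly what the paper does.

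Where you diverge is the junction analysis. The paper does not set up an induction on $\mathcal F^{-}$ or appeal to the determinant-$1$ Stern--Brocot relation. It simply records, once and for all, the explicit prefix and suffix of the bodies of $X$ and $Y$: in the case $n_R<n_L$, $L$-maximality and $R$-minimality of balanced words give $X=L\,RL^{k}\cdots RL^{k}$ and $Y=RL^{k+1}\cdots RL^{k}$, so both bodies end in $L^{k}$, $X$ begins with a single $L$, and $Y$ begins with $R$. With that in hand the four junction types $XX$, $XY$, $YX$, $YY$ are each a one-line check: the straddling $L$-run has length $k+1$ whenever the next block is $X$ and length $k$ whenever it is $Y$, and no $RR$ ever arises. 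The case $n_L<n_R$ is symmetric with $X=LR^{k+1}\cdots LR^{k}$ and $Y=R\,LR^{k}\cdots LR^{k}$.

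So what you flag as the ``main obstacle'' and propose to isolate as a lemma proved by induction along the mediant construction collapses, in the paper's treatment, to a direct four-case inspection once the start/end exponents are pinned down. Your inductive route would also succeed, and it has the merit of deriving those prefix/suffix facts rather than asserting them, but it is considerably heavier than what is needed. You were almost there when you wrote ``the $L$-maximal form of $X$ begins with $L$ and ends with $L$'': push that one step further to the exact exponents (front $1$, back $k$ for $X$; front $0$, back $k$ for $Y$) and the junction lemma becomes trivial rather than delicate. One small correction to your heuristic: the mechanism is not that the cut points in $X$ and in the preceding block are ``complementary''; rather, both $X$ and $Y$ carry the \emph{same} $L^{k}$ tail, and it is the heads that differ by exactly one $L$.
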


\begin{proof}
  $Z=(\overline{S}_0\overline{S}_1 \dots \overline{S}_{|S|-1})^{\infty}$ where $$\overline{S}_i= \begin{cases} X_0 X_1 \dots X_{p_1+q_1-1} &\text{if } S_i=L\\ Y_0 Y_1 \dots Y_{p_2+q_2-1} &\text{if } S_i=R \end{cases}.$$

First assume $n_R(X)<n_L(X)$, $n_R(Y)<n_L(Y)$. $X$ and $Y$ must have the form $X = LRL^k \dots RL^k0$, $Y=RL^{k+1} \dots RL^k0$. In $Z$, pairs of consecutive subwords are therefore of one of the following types:
\begin{itemize}
\item $XX=LRL^k \dots RL^k LRL^k \dots RL^k = LRL^k \dots RL^{k+1}RL^k \dots RL^k$
\item $XY=LRL^k \dots RL^k RL^{k+1} \dots RL^k$
\item $YX=RL^{k+1} \dots RL^k LRL^k \dots RL^k=RL^{k+1} \dots RL^{k+1}RL^k \dots RL^k$
\item $YY=RL^{k+1} \dots RL^k RL^{k+1} \dots RL^k$
\end{itemize}

Therefore, any shift of $Z$ starting with an $R$ has syllables of only two types: $RL^k$ and $RL^{k+1}$ and must therefore be a syllable permutation of the torus knot $T(p,q)$, where $p=n_R(Z)=n_L(S) p_1+n_R(S) p_2$ and $q=n_L(Z)=n_L(S) q_1+n_R(S) q_2$. Since $Z$ is reducible, $Z$ is not in the Farey tree and the syllable permutation must be nontrivial. Finally, $q = n_L(S) (kp_1+r_1) + n_R(S) (kp_2+r_2) = kp+r$, where $r=n_L(S) r_1 + n_R(S)r_2$. Since $0<r_1<p_1$ and $0<r_2<p_2$ and $r_1,r_2$ are integers, $1<r<p-1$.

Now assume $n_L(X)<n_R(X)$, $n_L(Y)<n_R(Y)$. Since $X=LR^{k+1} \dots LR^k$ and $m(Y)= RLR^k \dots LR^k$, pairs of consecutive words in $Z$ will now be of one of the following types:

\begin{itemize}
\item $XX=LR^{k+1} \dots LR^k LR^{k+1} \dots LR^k$
\item $XY=LR^{k+1} \dots LR^k RLR^k \dots LR^k = LR^{k+1} \dots LR^{k+1} LR^k \dots LR^k$
\item $YX=RLR^k \dots LR^k LR^{k+1} \dots LR^k$
\item $YY=RLR^k \dots LR^k RLR^k \dots LR^k = RLR^k \dots LR^{k+1} LR^k \dots LR^k$
\end{itemize}

In this case, any shift of $Z$ starting with an $L$ has syllables of only two types: $LR^k$ and $LR^{k+1}$ and is therefore a syllable permutation of the torus knot $T(p,q)$, where $p=n_L(Z)=n_L(S) p_1+n_R(S) p_2$ and $q=n_R(Z)=n_L(S) q_1+n_R(S) q_2$. Again, $Z$ is reducible and therefore the permutation is nontrivial. Using the same argument as in the previous case we conclude that $1<r<p-1$.

If $n_L(X)<n_R(X)$ and $n_L(Y)<n_R(Y)$ then the proof follows analogously. Now, since for every Farey pair $(X,Y)$ satisfying $t(X)>1,\ t(Y)>1$ \label{fareyneighstarprod} we have either $Y < X < LR0 $ or $ LR0 < Y < X$ and therefore  either $n_L(X)<n_R(X)$, $n_L(Y)<n_R(Y)$ or $n_R(X)<n_L(X)$, $n_R(Y)<n_L(Y)$, the result is proved for any Farey pair.
\end{proof}

So, since all torus knots have a corresponding word in the symbolic Farey trees, from Proposition \ref{satellite}, we conclude that, for Farey pairs $(X,Y)$, if $(X,Y)*S$ generates "new" knots, then they are hyperbolic.

In spite of the extensive computational tests indicate that all knots corresponding to pairs 
$(X,Y)*S$ for any Farey pair $(X,Y)$ are indeed hyperbolic, the results from \cite{PhysicaD} only allow us to demonstrate this for the following families.

\begin{corollary}\label{hyperbolicstarprod}

The Lorenz knots associated to the following families, all of which are of the type defined in Theorem \ref{fareyneighstarprod}, are hyperbolic:

\begin{enumerate}
\item $\left( L(RL^k)^{n+1}0, RL^{k+1}(RL^k)^{n-1}0 \right) \ast (LR)^{\infty}$, $k>0,\ n>1$ 

\item $\left( LRL^k (RL^{k+1})^{n-2} RL^k0, (RL^{k+1})^n RL^k0 \right) \ast (LR)^{\infty}$, $k>0,\ n>1$ 

\item $\left( L(RL^k)^n0, RL^{k+1} (RL^k)^{n-2} RL^{k+1} (RL^k)^{n-1}0 \right) \ast (LR)^{\infty}$, $k>0,\ n >1 \text{ odd}$ 

\item $\left( L(RL^k)^n RL^{k+1} (RL^k)^n0, RL^{k+1} (RL^k)^{n-1}0 \right) \ast (LR)^{\infty}$, $k>0,\ n>1 \text{ odd}$ 

\item $\left( L(RL^k)^{n+1}0, RL^{k+1}(RL^k)^{n-1}0 \right) \ast (LRL)^{\infty}$, $k>0,\ n\text{ even },\ n>1$ 

\item $\left( L(RL^k)^{n+1}0, RL^{k+1}(RL^k)^{n-1}0 \right) \ast (LRR)^{\infty}$, $k>0,\ n\text{ odd },\ n>1$ 

\item $\left( LRL^k (RL^{k+1})^{n-2} RL^k0, (RL^{k+1})^n RL^k (RL^{k+1})^{n-1} RL^k0 \right) \ast (LR)^{\infty}$,\newline $k>0,\ n>1, \text{ odd}$ 

\item $\left( LRL^k (RL^{k+1})^{n-2} RL^k (RL^{k+1})^{n-2} RL^k0, (RL^{k+1})^{n-1} RL^k0 \right) \ast (LR)^{\infty}$,\newline $k>0,\ n>1 \text{ odd}$ 

\item $\left( LRL^k (RL^{k+1})^{n-2} RL^k0, (RL^{k+1})^n RL^k0 \right) \ast (LRL)^{\infty}$, $k>0,\ n >1 \text{ odd }$ 

\item $\left( LRL^k (RL^{k+1})^{n-2} RL^k0, (RL^{k+1})^n RL^k0 \right) \ast (LRR)^{\infty}$, $k>0,\ n > 1 \text{ even }$ 
\end{enumerate}
  
\end{corollary}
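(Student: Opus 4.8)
The plan is to deduce the corollary from Theorem~\ref{fareyneighstarprod}, Propositions~\ref{satellite}, \ref{prop:notorus1} and~\ref{prop:notorus2}, and Thurston's theorem that a knot is hyperbolic precisely when it is neither a satellite nor a torus knot. First I would verify that each of the ten pairs $(X,Y)$ is a Farey pair with $t(X)>1$ and $t(Y)>1$, so that Theorem~\ref{fareyneighstarprod} applies: the Farey-pair property is read directly off the $RL^k$/$RL^{k+1}$ block structure of $X$ and $Y$ together with the recursive rule defining $\mathcal F^-$ and the map $m$, and the trip-number conditions hold because $X$ and $Y$ are torus words with $t\big(T(p_i,q_i)\big)=\min\{n_L,n_R\}=p_i$, which in every family is at least $2$ because $n>1$. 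Theorem~\ref{fareyneighstarprod} then tells us that $Z=(X,Y)\ast S$ lies in $P(p,q)\setminus\{W(p,q)\}$, with $p=n_L(S)p_1+n_R(S)p_2$, $q=n_L(S)q_1+n_R(S)q_2=kp+r$, and $r=n_L(S)r_1+n_R(S)r_2$.

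The bulk of the work is then the bookkeeping, family by family: counting $L$'s and $R$'s block by block in $X$ and $Y$ to obtain $p_i=\min\{n_L,n_R\}$, $q_i=\max\{n_L,n_R\}$ and $r_i=q_i-kp_i$, and then assembling $(p,q,r)$ using $S\in\{LR0,\,LRL0,\,LRR0\}$. For family~(1), for example, $X=L(RL^k)^{n+1}0$ has $n_R=n+1<n_L=k(n+1)+1$, giving $p_1=n+1$, $q_1=k(n+1)+1$, $r_1=1$, and similarly $Y$ gives $p_2=n$, $q_2=kn+1$, $r_2=1$; with $S=LR0$ this yields $p=2n+1$ (odd, and $\ge 5$ since $n>1$), $q=k(2n+1)+2$ and $r=2$, which is exactly the case $q=kp+2$ of Proposition~\ref{prop:notorus1}. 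Family~(2) produces $p=2n+1$ with $r=2n-1=p-2$, i.e.\ the case $q=(k+1)p-2$ of the same proposition, while families~(3)--(10) each yield an even $p$ (of the form $3n-1$, $3n+1$ or $3n+2$, hence never a multiple of $3$, and $>4$ since $n>1$) together with $r\in\{3,p-3\}$, which is precisely what Proposition~\ref{prop:notorus2} covers; the parity restriction imposed on $n$ in each of those families is exactly what forces $p$ to be even. I would collect the resulting triples $(p,q,r)$ for all ten families in a short table.

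Once this is done the conclusion is immediate: in each case $Z\in P(p,q)$ and $Z\ne W(p,q)$, so by Proposition~\ref{prop:notorus1} or~\ref{prop:notorus2} the knot of $Z$ is not a torus knot; by Proposition~\ref{satellite} (this is where Morton's conjecture is used) it is not a satellite; hence by Thurston's theorem it is hyperbolic. The only genuinely delicate point — and the main place an error could hide — is the arithmetic in the second step: correctly identifying which of $n_L$, $n_R$ is the smaller so that the labels $p_i,q_i,r_i$ are assigned correctly, checking $0<r_i<p_i$, and, above all, checking that the \emph{same} integer $k$ is the quotient of $q_i$ by $p_i$ for \emph{both} $X$ and $Y$ — it is this coincidence that singles out these particular families and makes the $\ast$-product split as in Theorem~\ref{fareyneighstarprod}. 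I would also recheck the smallest admissible value of $n$ in each family to be sure the hypotheses $p>4$, the required parity of $p$, and $3\nmid p$ really hold there. Finally I should note the structural limitation: since Propositions~\ref{prop:notorus1} and~\ref{prop:notorus2} only exclude torus knots from $P(p,q)$ when $q\equiv\pm2\pmod p$ (for odd $p$) or $q\equiv\pm3\pmod p$ (for even $p$ with $3\nmid p$), this method cannot reach an arbitrary Farey pair, which is why the general statement — that every word $(X,Y)\ast S$ with $(X,Y)$ a Farey pair gives a hyperbolic knot — remains only a conjecture, supported by the computations of~\cite{Gomes14}.
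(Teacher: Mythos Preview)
Your proposal is correct and follows essentially the same route as the paper: verify each $(X,Y)$ is a Farey pair so that Theorem~\ref{fareyneighstarprod} applies, compute $(p,q,r)$ family by family to land in the cases covered by Propositions~\ref{prop:notorus1} or~\ref{prop:notorus2}, and conclude hyperbolicity via Proposition~\ref{satellite} and Thurston. The one place where the paper does more work than your sketch suggests is the Farey-pair verification: rather than being ``read directly off'' the block structure, it is established for each family by exhibiting the explicit concatenations in $\mathcal F^-$ (e.g.\ showing $L(RL^k)^{n+1}0$ and $L(RL^k)^n0$ are consecutive by tracing back to the neighbours $LRL^{k-1}0$ and $LRL^k0$), so you should expect that step to require a short inductive argument per family rather than a single observation.
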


\begin{proof}
  From Lemma \ref{farneighadmiss}, to prove that each of the pairs $(X,Y)$ above is admissible it is sufficient to show that they are Farey pairs. We then use Theorem \ref{fareyneighstarprod} to prove that each product $(X,Y) \ast S$ is a syllable  permutation of the standard $T(p,q)$ word, with $p,q$ satisfying either $q=kp+2$ or $q=(k+1)p-2$ with $p>4$ odd, or $q=kp+3$ or $q=(k+1)p-3$ with $p$ even and not a multiple of $3$. 

  \begin{enumerate}
  \item For this family, $(X,Y) = \left( L(RL^k)^{n+1}0, RL^{k+1}(RL^k)^{n-1}0 \right)$, so $Y = m(L(RL^k)^n0)$ and $L(RL^k)^n0<X$. We start by remarking that $L0,LR0$ are Farey neighbours in level 1 of the maximal Farey tree. Therefore  $L0,LRL0$  are also Farey neighbours. Since for $k>1$ $LRL^k0=LRL^{k-1}L0$ is the concatenation of $LRL^{k-1}0$ and $L0$, $LRL^k0$ and $LRL^{k-1}0$ are Farey neighbours, $k>0$. Finally, $L(RL^k)^{n+1}0=\left(LRL^{k-1}\right)^nLRL^k0$ therefore $L(RL^k)^{n+1}0, L(RL^k)^n0$ are Farey neighbours and $\big( L(RL^k)^{n+1}0,\allowbreak RL^{k+1}(RL^k)^{n-1}0 \big)$ is a Farey pair.

Since $p_1=n_R(X)=n+1>2$, $p_2 = n_R(Y)=n>1$ and $n_L(S)=n_R(S)=1$, the trip number $p=p_1+p_2=2n+1$ is odd. From $q_1=n_L(X)=(n+1)k+1=kp_1+1)$ and $q_2=k+1+(n-1)k=kn+1=kp_2+1$, so $r=r_1+r_2=2$ and $(X,Y) \ast S$ is a syllable permutation of the standard word corresponding to $T(p,q)=T(p,kp+2)$, $p>4$ odd and therefore corresponds to a hyperbolic knot.
 
  \item The pair $(X,Y) = \big( LRL^k (RL^{k+1})^{n-2} RL^k0, (RL^{k+1})^n RL^k0 \big)$, so we have $Y = m(LRL^k (RL^{k+1})^{n-1} RL^k0)$ and $RL^{k+1})^{n-1} RL^k0<X$. From the previous case, $LRL^k0$ and $LRL^{k-1}0$ are Farey neighbours.

Since $LRL^kRL^k0=LRL^{k-1} LRL^k0$, $LRL^k RL^k0$ and $LRL^k0$ are also Farey neighbours and therefore $LRL^k RL^{k+1} RL^k0 = LRL^k RL^k LRL^k 0$ and $LRL^k0$ are also Farey neighbours.

For $n>2$, $LRL^k (RL^{k+1})^{n-2} RL^k0 = LRL^k LRL^k (RL^{k+1})^{n-1} RL^k0$, so $LRL^k (RL^{k+1})^{n-2} RL^k0$ and $LRL^k (RL^{k+1})^{n-1} RL^k0$ are Farey neighbours and $(X,Y)$ is therefore a Farey pair.
 
For this family $p_1=n$, $p_2=n+1$, so $p=2n+1>4$ is odd, while $r_1=p_1-1$, $r_2=p_2-1$ and therefore $r=p-2$. $(X,Y) \ast S$ is thus a syllable permutation of the standard word for $T(p,k(p+1)-2)$.

  \item In this case $(X,Y) = \left( L(RL^k)^n0, RL^{k+1} (RL^k)^{n-2} RL^{k+1} (RL^k)^{n-1}0 \right)$, so  $Y =\allowbreak m( L(RL^k)^n L(RL^k)^{n-1}0)$ and $ L(RL^k)^n L(RL^k)^{n-1}0<X $. $L(RL^k)^n0$ and $L(RL^k)^{n-1}0$ are Farey neighbours since $L(RL^k)^n0=(LRL^{k-1})^{n-1}LRL^k0$ and $LRL^{k-1},LRL^k0$ are Farey neighbours as seen in 1. Therefore, $X$ and  $L(RL^k)^n L(RL^k)^{n-1}0$ are also Farey neighbours and $(X,Y)$ is Farey.

The trip number $p=n + 1 + n-2 +1 +n-1 =3n-1$ is even and not divisible by 3. Since $r_1=1,\ +r_2=2$, $r=3$ and $(X,Y) \ast S$ is a syllable permutation of the Farey word for $T(p,kp+3)$, $p even$, $p>4$.

  \item In family 4, $(X,Y) = \left( L(RL^k)^n RL^{k+1} (RL^k)^n0, RL^{k+1} (RL^k)^{n-1}0 \right)$ and $Y=m(L(RL^k)^n0)$, so $X =L(RL^k)^{n+1}  L(RL^k)^n0$ and from the previous proof  $X$ and $L(RL^k)^n0$ are Farey neighbours and $(X,Y)$ is a Farey pair.

The trip number $p=2n+1 + n = 3n+1$ is again even and not divisible by 3. Since $r_1=2,\ +r_2=1$, $r=3$ and $(X,Y) \ast S$ is a syllable permutation of the Farey word for $T(p,kp+3)$, $p even$, $p>4$.

\item $(X,Y)$ is identical to the pair in family 1 and therefore a Farey pair.

For the trip number we have $p=n_L(S)p_1 + n_R(S)p_2=2(n+1) + n = 3n+2$ even and not divisible by $3$; $r=2r_1+r_2=3$. $(X,Y) \ast S$ is again a syllable permutation of the Farey word for $T(p,kp+3)$, $p$ even, $p>4$.

\item $(X,Y)$ is Farey (it is again identical to the pair in family 1).

The trip number is given by $p=n_L(S)p_1 + n_R(S)p_2=n+1 + 2n =3n+1$ (even since $n$ is odd) while $r=r_1+2r_2=3$ and therefore $(X,Y) \ast S$ is once more a syllable permutation of the Farey word for $T(p,kp+3)$, $p even$, $p>4$.

  \item Here $(X,Y) = \left( LRL^k (RL^{k+1})^{n-2} RL^k0, (RL^{k+1})^n RL^k (RL^{k+1})^{n-1} RL^k0 \right)$. We have $Y = m(L(RL^k) (RL^{k+1})^{n-1} RL^k (RL^{k+1})^{n-1} RL^k0) = m(XU)$ with $U= LRL^k (RL^{k+1})^{n-1} RL^k0$. Since $U= LRL^k (RL^{k+1})^{n-2} RL\ LRL^k0\allowbreak =LRL^{k-1}(LRL^k)^{n-1}0$ and $LRL^{k-1}0,LRL^k0$ are Farey neighbours as seen above, $X,U$ and therefore $X,XU$ are also Farey neighbours. Therefore, $(X,Y)$ is a Farey pair.

For this family $p=n + 2n+1=3n+1$ is even since $n$ is odd and not divisible by 3, while $r=(p_1-1)+(p_2-2)$=$p-3$. Therefore, $(X,Y)*S$ is a syllable permutation of the Farey word corresponding to $T(p,k(p+1)-3)$.

\item $X=LRL^k (RL^{k+1})^{n-2} RL^k (RL^{k+1})^{n-2} RL^k0$, $Y=m(LRL^k (RL^{k+1})^{n-2} RL^k0)$ so  $X=VLRL^k (RL^{k+1})^{n-2} RL^k0$ with $V=LRL^k(RL^{k+1})^{n-3}RL^k$. Since $V=LRL^{k-1}(LRL^k)^{n-2}$, we can write $LRL^k (RL^{k+1})^{n-2} RL^k0=V\ LRL^k$,  so $X,LRL^k (RL^{k+1})^{n-2} RL^k0$ are Farey neighbours and $(X,Y)$ is a Farey pair.

In this case $p=2n-1 + n= 3n -1$ is even since $n$ is odd and not a multiple of $3$. We have $r=(p_1-2)+(p_2-1)$=$p-3$. $(X,Y)*S$ is thus a syllable permutation of the Farey word corresponding to $T(p,k(p+1)-3)$.

\item $(X,Y)$ is the same as in family 2, so $p_1=n$, $p_2=n+1$ and $p=2p_1+p_2=3n+1$ is even for $n$ odd, and undivisible by $3$. $r=2r_1+r_2=2(p_1-1)+p_2-1=p-3$, therefore $(X,Y)*S$ is thus a syllable permutation of the Farey word corresponding to $T(p,k(p+1)-3)$.

\item $(X,Y)$ is again the same as in family 2, so $p_1=n$, $p_2=n+1$ and $p=p_1+2p_2= 3n+2$ is even since $n$ is even and not a multiple of $3$. $r=r_1+2r_2=p_1-1 + 2(p_2-1)=p-3$, so $(X,Y)*S$ is a syllable permutation of the Farey word corresponding to $T(p,k(p+1)-3)$.
  \end{enumerate}

\end{proof}

\begin{remark}\label{exchangeLR}
  To these families we can add those obtained by exchanging $L$ and $R$ in all words. More precisely, let $\hat{X}$, $\hat{Y}$ and $\hat{S}$ be the words obtained by exchanging $L$ and $R$ in $X$, $Y$ and $S$, respectively. Then due to the simmetry in the Farey tree, if $(X,Y)$ is a Farey pair and $(X,Y) * S$ corresponds to a  hyperbolic knot, $(\hat{Y},\hat{X})$ is a Farey pair and $(\hat{Y},\hat{X}) * \hat{S}$ is an R-minimal word corresponding to the same hyperbolic Lorenz knot. 
\end{remark}

\begin{remark}
  From the list of hyperbolic Lorenz knots presented by Birmann and Kofman in \cite{Birman09}, eight can be represented by syllable permutations of torus knot words. All the words representing these eight knots belong to one of the families defined in Corollary \ref{hyperbolicstarprod}.
\end{remark}

\begin{remark}
  The families defined in Corollary \ref{hyperbolicstarprod} and Remark \ref{exchangeLR} contain all the words of type $(X,Y)*S$, up to shifting, that belong to one of the sets $P(p,kp+2)$, $P(p,k(p+1)-2)$, $P(p,kp+3)$ or $P(p,k(p+1)-3)$ of syllable permutations defined in propositions \ref{prop:notorus1} and \ref{prop:notorus2}.
\end{remark}

\end{document}